\newtheorem{thm}{Theorem}[section]
\newtheorem*{thm*}{Theorem}
\newtheorem{lem}[thm]{Lemma}
\newtheorem{cor}[thm]{Corollary}
\newtheorem{prop}[thm]{Proposition}
\theoremstyle{definition}
\newtheorem*{notn*}{Notation}
\newtheorem*{hyp*}{Hypothesis}
\newtheorem*{rem*}{Remark}
\numberwithin{equation}{section}
\newcommand{\secref}[1]{Section~\textup{\ref{#1}}}
\newcommand{\thmref}[1]{Theorem~\textup{\ref{#1}}}
\newcommand{\corref}[1]{Corollary~\textup{\ref{#1}}}
\newcommand{\lemref}[1]{Lemma~\textup{\ref{#1}}}
\newcommand{\propref}[1]{Proposition~\textup{\ref{#1}}}
\newcommand{\appxref}[1]{Appendix~\textup{\ref{#1}}}
\newcommand{\midtext}[1]{\quad\text{#1}\quad}
\newcommand{\righttext}[1]{\quad\text{#1 }}
\renewcommand{\and}{\midtext{and}}
\renewcommand{\for}{\righttext{for}}
\newcommand{\KK}{\mathcal K}
\newcommand{\LL}{\mathcal L}
\newcommand{\OO}{\mathcal O}
\renewcommand{\epsilon}{\varepsilon}
\DeclareMathOperator{\aut}{Aut}
\DeclareMathOperator{\fix}{Fix}
\DeclareMathOperator{\supp}{supp}
\newcommand{\id}{\text{\textup{id}}}
\newcommand{\<}{\langle}
\renewcommand{\>}{\rangle}
\newcommand{\inv}{^{-1}}
\renewcommand{\bar}{\overline}
\newcommand{\rt}{\textup{rt}}
\newcommand{\cs}%
{\ensuremath{\mathbf{C^*}}}
\newcommand{\csnd}%
{\ensuremath{\cs\!\!_\mathbf{nd}}}
\newcommand{\coact}%
{\ensuremath{\mathbf{C^*coact}}}
\newcommand{\coactnd}%
{\ensuremath{\coact_\mathbf{nd}}}
\newcommand{\coactn}%
{\ensuremath{\coact^\mathbf{n}}}
\newcommand{\coactnnd}%
{\ensuremath{\coactn_\mathbf{nd}}}
\newcommand{\coactm}%
{\ensuremath{\coact^\mathbf{m}}}
\newcommand{\coactmnd}%
{\ensuremath{\coactm_\mathbf{nd}}}
\newcommand{\x}{\xi}
\newcommand{\y}{\eta}
\newcommand{\ideal}[2]{M(#1;#2)}
\newcommand{\cpct}[1]{#1^{(1)}}
\begin{document}

\title[Coactions and skew products]
{Coactions and skew products of topological graphs}

\author[Kaliszewski and Quigg]
{S.~Kaliszewski and John Quigg}
\address [S.~Kaliszewski] {School of Mathematical and Statistical Sciences, Arizona State University, Tempe, Arizona 85287} \email{kaliszewski@asu.edu}\
\address[John Quigg]{School of Mathematical and Statistical Sciences, Arizona State University, Tempe, Arizona 85287} \email{quigg@asu.edu}

\subjclass[2000]{Primary 46L05; Secondary 46L55}

\keywords{topological graph, coaction, skew product}

%\date{\today}
\date{December 12, 2012}

\begin{abstract}
We show that the $C^*$-algebra of a skew-product topological 
graph $E\times_\kappa G$ is isomorphic to the crossed product of~$C^*(E)$ 
by a coaction of the locally compact group~$G$.
\end{abstract}

\maketitle

\section{Introduction}
\label{intro}

In \cite[Theorem~2.4]{kqr:graph} we proved that if $E$ is a directed graph and $\kappa$ is a function from the edges of $E$ to a discrete group $G$, then the graph algebra $C^*(E\times_\kappa G)$ of the skew-product graph is a crossed product of $C^*(E)$ by a coaction of $G$.
This was later generalized to 
homogeneous spaces $G/H$ in \cite[Theorem~3.4]{dpr},
and to higher-rank graphs in \cite[Theorem~7.1]{pqr:cover}.
In this paper we generalize the result to topological graphs and locally compact groups.
More precisely, we prove in \thmref{indirect} that if $\kappa\colon E\to G$ is a continuous
function (that is, a \emph{cocycle}), then there exists a coaction $\epsilon$
of $G$ on $C^*(E)$ such that
\[
C^*(E\times_\kappa G)\cong C^*(E)\times_\epsilon G.
\]

We give two distinct approaches to
the coaction:
in \secref{indirect section} we obtain the coaction indirectly, via an application of Landstad duality,
and
in \secref{coaction} we construct the coaction directly, applying techniques developed in \cite{KQRCorrespondenceCoaction}.
We thank Iain Raeburn for helpful conversations concerning this direct approach.

In \secref{prelim1} we record our conventions for topological graphs, $C^*$-correspondences, skew products, multiplier modules, and functoriality of Cuntz-Pimsner algebras.
In an appendix we develop a few tools that we need for dealing with certain bimodule multipliers in terms of function spaces.

\section{Preliminaries}
\label{prelim1}

In general, we refer to
\cite{rae:graphalg} (see also \cite{kat:class})
for topological graphs,
and to 
\cite{rae:graphalg, enchilada} (see also \cite{KQRCorrespondenceFunctor})
for $C^*$-correspondences,
except we make a few
minor, self-explanatory modifications.
Thus, a topological graph
$E$ comprises locally compact Hausdorff spaces $E^1,E^0$ and maps
$s,r\colon E^1\to E^0$ with $s$ a local homeomorphism and $r$ continuous.
Let $A=C_0(E^0)$, and let $X=X(E)$ be the associated
$A$-correspondence, which is the completion of $C_c(E^1)$ with
operations defined for $f\in A$ and $\x,\y\in C_c(E^1)$ by
\begin{align*}
f\cdot \x(e)&=f(r(e))\x(e)\\
\x\cdot f(e)&=\x(e)f(s(e))\\
\<\x,\y\>(v)&=\sum_{s(e)=v}\bar{\x(e)}\y(e).
\end{align*}

Throughout this paper we will also write $A'=C_0(E^1)$,
so that~$X$ can be regarded as an $A'-A$ correspondence as well as an $A$-correspondence.
Recall from \cite{kat:class} that the left $A'$-module multiplication is nondegenerate in the sense that $A'\cdot X=X$, and is determined by the homomorphism $\pi_E\colon A'\to\LL(X)$ given by $(\pi_E(f)\xi)(e)=f(e)\xi(e)$ for $f\in A'$ and $\xi\in C_c(E^1)$, and the (nondegenerate) left $A$-module multiplication $\varphi_A\colon A\to\LL(X)$ is then given by $\varphi_A(f)=\pi_E(\footnote{¥}\circ r)$ for $f\in A$.

We denote
by $(k_X,k_A)\colon(X,A)\to C^*(E)=\OO_X$ the universal Cuntz-Pimsner covariant representation,
and for any Cuntz-Pimsner covariant representation $(\psi,\pi)$ of $(X,A)$ in a $C^*$-algebra $B$
we denote
by $\psi^{(1)}\colon\KK(X)\to B$ the associated homomorphism\footnote{and here we use the notation of \cite{dkq}; Raeburn would write $(\psi,\pi)^{(1)}$} determined by $\psi^{(1)}(\theta_{\xi,\eta})=\psi(\xi)\psi(\eta)^*$,
and by $\psi\times\pi\colon C^*(E)\to B$ the unique homomorphism satisfying
\[
(\psi\times\pi)\circ k_X=\psi
\midtext{and}
(\psi\times\pi)\circ k_A=\pi.
\]
Note that in \cite{enchilada}, correspondences were called right-Hilbert bimodules, and nondegeneracy was built into the definition. All our correspondences will in fact be nondegenerate, so we can freely apply the results from \cite{enchilada}.

For skew products of topological graphs, we use a slight variation of
the definition in
\cite{dkq}:
the main difference is that
we use the same notational conventions as those in
\cite{rae:graphalg} for skew products of discrete directed graphs.
Thus, a \emph{cocycle} of a locally compact group $G$ on a topological graph $E$ is a continuous map $\kappa\colon E^1\to G$, and 
the \emph{skew product} is the topological graph $E\times_\kappa G$ with
\begin{gather*}
(E\times_\kappa G)^i=E^i\times G\quad (i=0,1),\\
r(e,t)=(r(e),\kappa(e)t),\and s(e,t)=(s(e),t).
\end{gather*}

Our conventions for multipliers of correspondences are taken primarily from 
\cite[Chapter~1]{enchilada}, but also see \cite{KQRCorrespondenceFunctor}.
If
$(\pi,\psi,\tau)\colon(A,X,B)\to (M(C),M(Y),M(D))$
is a correspondence homomorphism, then there is a unique homomorphism
$\psi^{(1)}\colon\KK(X)\to M(\KK(Y))=\LL(Y)$
such that
$\psi^{(1)}(\theta_{\xi,\eta})=\psi(\xi)\psi(\eta)^*$ for $\xi,\eta\in X$.
(For this result in the stated level of generality, in particular with no nondegeneracy assumption on $(\psi,\pi)$, see \cite[Lemma~2.1]{KQRCorrespondenceFunctor}.)
If $(\psi,\pi)$ happens to be nondegenerate, then so is $\psi^{(1)}$, and hence $\psi^{(1)}$ extends uniquely to a homomorphism
$\bar{\psi^{(1)}}\colon\LL(X)\to \LL(Y)$.

A correspondence homomorphism $(\psi,\pi)\colon(X,A)\to (M(Y),M(B))$ is defined in 
\cite{KQRCorrespondenceFunctor} to be  \emph{Cuntz-Pimsner covariant} if
\begin{enumerate}
\item $\psi(X)\subset M_B(Y)$,

\item $\pi\colon A\to M(B)$ is nondegenerate,

\item $\pi(J_X)\subset \ideal{B}{J_Y}$, and

\item the diagram
\[
\xymatrix{
J_X \ar[r]^-{\pi|} \ar[d]_{\varphi_A|}
&\ideal{B}{J_Y} \ar[d]^{\bar{\varphi_B}\bigm|}
\\
\KK(X) \ar[r]_-{\cpct\psi}
&M_B(\KK(Y))
}
\]
commutes,
\end{enumerate}
where, for an ideal $I$ of a $C^*$-algebra $C$,
\[
\ideal{C}{I}:=\{m\in M(C):mC\cup Cm\subset I\}.
\]
By \cite[Corollary~3.6]{KQRCorrespondenceFunctor},
for each Cuntz-Pimsner covariant homomorphism $(\psi,\pi)$, 
there is a unique homomorphism $\OO_{\psi,\pi}$ making the diagram
\[
\xymatrix@C+30pt{
(X,A) \ar[r]^-{(\psi,\pi)} \ar[d]_{(k_X,k_A)}
&(M_B(Y),M(B)) \ar[d]^{(\bar{k_Y},\bar{k_B})}
\\
\OO_X \ar[r]_-{\OO_{\psi,\pi}}
&M_B(\OO_Y)
}
\]
commute.
Moreover, $\OO_{\psi,\pi}$ is nondegenerate, and is injective if $\pi$ is.

Our conventions for coactions on correspondences mainly follow \cite{enchilada}, but see also \cite{KQRCorrespondenceCoaction}.

\section{Indirect approach}
\label{indirect section}

In this section we apply Landstad duality to give an indirect approach to the following result:

\begin{thm}\label{indirect}
If $\kappa\colon E^1\to G$ is a cocycle on a topological graph $E$, then there is a coaction $\epsilon$ of $G$ on $C^*(E)$ such that
\[
C^*(E\times_\kappa G)\cong C^*(E)\times_\epsilon G.
\]
\end{thm}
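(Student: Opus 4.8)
The plan is to realize the natural $G$-action on $C^*(E\times_\kappa G)$ as a dual action and then apply Landstad duality for coactions, which will recover $C^*(E)$ as the associated Landstad algebra. First I would construct the action. Right translation $\rho_g(x,t)=(x,tg)$ on $E^i\times G$ commutes with the source and range maps of the skew product: indeed $s\rho_g=\rho_g s$, and $r(\rho_g(e,t))=(r(e),\kappa(e)tg)=\rho_g(r(e,t))$, so each $\rho_g$ is a graph automorphism of $E\times_\kappa G$. Pulling back functions gives a correspondence action of $G$ on $(Y,B)$, where $B=C_0(E^0\times G)$ and $Y=X(E\times_\kappa G)$, and hence an action $\alpha$ of $G$ on $C^*(E\times_\kappa G)=\OO_Y$ (continuity of $\alpha$ follows from that of $\kappa$ and local compactness of $G$). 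Writing $B=A\otimes C_0(G)$ with $A=C_0(E^0)$, the second-leg inclusion $f\mapsto 1\otimes f$ followed by $\bar{k_B}$ yields a nondegenerate homomorphism $\mu\colon C_0(G)\to M(C^*(E\times_\kappa G))$, and a direct check shows $\mu$ is equivariant for $\alpha$ and for right translation on $C_0(G)$.

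Next I would invoke Landstad duality for coactions: the existence of this equivariant nondegenerate $\mu$ guarantees that $\alpha$ is conjugate to a dual action, so that $C^*(E\times_\kappa G)\cong D\times_\delta G$ for a coaction $\delta$ of $G$, where $D$ is the Landstad algebra sitting inside $M(C^*(E\times_\kappa G))$, namely the $\alpha$-invariant multipliers $m$ satisfying $\mu(f)m,\ m\mu(f)\in C^*(E\times_\kappa G)$ for all $f\in C_0(G)$ together with the appropriate continuity condition. It then remains to identify $D$ with $C^*(E)$.

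For that identification I would build a Cuntz--Pimsner covariant homomorphism $(\psi,\pi)\colon(X,A)\to(M(Y),M(B))$ by sending $a\in A$ to $a\otimes 1\in M(B)$ and sending $\xi\in C_c(E^1)\subset X$ to the multiplier of $Y$ that is ``$\xi$ made constant in the $G$-variable.'' Making sense of these constant-in-$G$ elements as genuine correspondence multipliers, and verifying conditions (i)--(iv) for Cuntz--Pimsner covariance, is exactly what the function-space tools of the appendix and the multiplier machinery of \secref{prelim1} are for; granting this, the induced homomorphism $\OO_{\psi,\pi}\colon C^*(E)\to M(C^*(E\times_\kappa G))$ is nondegenerate, and since $\pi$ is injective, so is $\OO_{\psi,\pi}$. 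I would then show its image is precisely the Landstad algebra $D$: invariance under $\alpha$ is immediate from constancy in $G$; the relations $\mu(f)\OO_{\psi,\pi}(c)\in C^*(E\times_\kappa G)$ reduce to multiplying constant-in-$G$ elements by elements decaying in $G$; and surjectivity onto $D$ follows by approximating a general Landstad multiplier from the dense subalgebra generated by the $k_Y$- and $k_B$-images. Transporting $\delta$ across the resulting isomorphism $C^*(E)\cong D$ produces the desired coaction $\epsilon$ on $C^*(E)$ with $C^*(E\times_\kappa G)\cong C^*(E)\times_\epsilon G$.

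The main obstacle is this last identification. Both realizing the constant-in-$G$ elements rigorously as module multipliers (so that $(\psi,\pi)$ is genuinely a correspondence homomorphism satisfying (i)--(iv)) and proving that \emph{every} Landstad multiplier arises this way are the delicate points; checking equivariance of $\mu$ and invariance of the image is routine by comparison. This is where the appendix on bimodule multipliers in terms of function spaces, together with the functoriality of $\OO_{\psi,\pi}$, does the real work.
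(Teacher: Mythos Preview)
Your strategy is exactly the paper's: build the action $\alpha$ on $C^*(F)$ from right translation on the skew product, build the equivariant $\mu\colon C_0(G)\to M(C^*(F))$ from the second tensor leg of $B=A\otimes C_0(G)$, and then apply Landstad duality. The difference is in how the Landstad/fixed-point algebra is identified with $C^*(E)$. The paper does not argue this directly; it simply invokes \cite[Theorem~5.6]{dkq}, which exploits the fact that the $G$-action on the skew-product graph $F$ is free and proper to produce the isomorphism $\Pi\colon C^*(E)\xrightarrow{\cong} C^*(F)^\alpha$ onto Rieffel's generalized fixed-point algebra. Your correspondence homomorphism $(\psi,\pi)=(\id_X\otimes 1,\id_A\otimes 1)$ is indeed the one underlying $\Pi$ (the paper makes this explicit in the proof of \propref{equivariant}), so the construction of the map is right.

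Where your proposal is thin is precisely where you flag it: surjectivity onto the Landstad algebra. ``Approximating a general Landstad multiplier from the dense subalgebra generated by the $k_Y$- and $k_B$-images'' is not an argument --- that dense subalgebra generates all of $C^*(F)$, not the fixed-point part, so some genuine averaging/slicing over $G$ is needed to land in the $\alpha$-invariants, and that is exactly the content of the Rieffel proper-action machinery that \cite{dkq} imports. Two smaller points: the appendix of this paper is not used in the proof of \thmref{indirect} at all (it is only needed in \secref{coaction} to compare the two coactions), so you should not lean on it here; and the relevant fixed-point algebra in the version of Landstad duality being applied is Rieffel's generalized fixed-point algebra $\fix(C^*(F),\alpha,\mu)$ rather than the classical Landstad conditions you wrote down, though in this situation they agree.
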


Throughout the rest of this paper, in addition to $A=C_0(E^0)$ and $X=X(E)$, 
we will also use the following abbreviations:
\begin{itemize}
\item $F=E\times_\kappa G$;
\item $Y=X(E\times_\kappa G)$;
\item $B=C_0((E\times_\kappa G)^0)$.
\end{itemize}

\begin{proof}
To apply Landstad duality 
\cite[Theorem~3.3]{q:land}
(stated in more modern form in \cite[Theorem~4.1]{kqr:proper}), we need the following ingredients:
an action $\alpha\colon G\to \aut C^*(F)$,
a 
$\rt-\alpha$
equivariant nondegenerate homomorphism $\mu\colon C_0(G)\to M(C^*(F))$
(where ``$\rt$'' is action of $G$ on $C_0(G)$ by right translation),
and an injective nondegenerate homomorphism
\[
\Pi\colon C^*(E)\to M(C^*(F))
\]
whose image coincides with Rieffel's generalized fixed-point algebra $C^*(F)^\alpha$.
Note that in \cite{kqr:proper}, $C^*(F)^\alpha$ would be written as $\fix(C^*(F),\alpha,\mu)$.

Since $G$ acts on the right of the skew-product topological graph $F$ via right translation in the second coordinate,
by \cite[Proposition~5.4 and discussion preceding Remark~5.3]{dkq}
we have an action $\beta=(\beta^1,\beta^0)\colon G\to \aut Y$ such that
\begin{alignat*}{2}
\beta^1_t(\xi)(e,r)&=\xi(e,rt)&\quad&\text{for }\xi\in Y\\
\beta^0_t(g)(v,r)&=g(v,rt)&&\text{for }g\in B,
\end{alignat*}
which in turn gives an action on $C^*(F)$ such that
\begin{align*}
\alpha_t\circ k_Y&=k_Y\circ \beta^1_t\\
\alpha_t\circ k_B&=k_B\circ \beta^0_t.
\end{align*}

Since $F^0=E^0\times G$, we have
\[
B=C_0(F^0)=C_0(E^0)\otimes C_0(G)=A\otimes C_0(G),
\]
so we can
define a nondegenerate homomorphism $\mu\colon C_0(G)\to M(C^*(F))$ by
\[
\mu(g)=\bar{k_B}(1_{M(A)}\otimes g),
\]
and then it is routine to verify that $\mu$ is $\rt-\alpha$ equivariant.

Finally, since the action of $G$ on $F$ is free and proper,
the proof of \cite[Theorem~5.6]{dkq}
constructs an isomorphism
\[
\Pi\colon C^*(E)\xrightarrow{\cong} C^*(F)^\alpha,
\]
and then the result follows from Landstad duality.
\end{proof}

%---- direct ----

\section{A direct approach to the coaction}
\label{coaction}

As in \secref{indirect section}, we suppose we are given a cocycle $\kappa\colon E^1\to G$ of a locally compact group $G$ on a topological graph $E$, and we continue to write 
$A=C_0(E^0)$, $A'=C_0(E^1)$, $X=X(E)$, $F=E\times_\kappa G$, $Y=X(F)$, and $B=C_0(F^0)$.

Recall that the canonical embedding $G\hookrightarrow M(C^*(G))$
is identified with a unitary element $w_G$ of $M(C_0(G)\otimes C^*(G))$.
Similarly,
we may identify $\kappa$ with a unitary element of
\[
C_b(E^1,M^\beta(C^*(G)))=M(A'\otimes C^*(G)),
\]
where $M^\beta(C^*(G))$ denotes the multiplier algebra $M(C^*(G))$ with the strict topology.

Define a nondegenerate homomorphism $\kappa^*\colon C_0(G)\to M(A')$ by $\kappa^*(f)=f\circ\kappa$, and a nondegenerate homomorphism
$\nu\colon C_0(G)\to \LL(X)$ by
\[
\nu=\pi_E\circ\kappa^*,
\]
where $\pi_E\colon A'\to \LL(X)$ is the homomorphism given on $C_c(E^1)$ by pointwise multiplication.

\begin{prop}\label{sigma}
With the above notation, 
there is a coaction 
$(\sigma,\id_A\otimes 1)$ of $G$ on $(X,A)$ defined by
\[
\sigma(\xi)=v\cdot (\xi\otimes 1),
\]
where
\[
v=\bar{\nu\otimes\id}(w_G)\in \LL(X\otimes C^*(G)),
\]
and moreover
there is a coaction
$\zeta$ of $G$ on $C^*(E)$ such that
\begin{align*}
\zeta\circ k_X&=\bar{k_X\otimes\id}\circ\sigma\\
\zeta\circ k_A&=k_A\otimes 1.
\end{align*}
\end{prop}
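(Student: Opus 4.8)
The plan is to verify that $\sigma$ is a well-defined coaction on the correspondence $(X,A)$, and then promote it to a coaction $\zeta$ on $C^*(E)=\OO_X$ by invoking the functoriality machinery recorded in the preliminaries. First I would check that $v=\bar{\nu\otimes\id}(w_G)$ is a genuine unitary in $\LL(X\otimes C^*(G))$: since $\nu=\pi_E\circ\kappa^*$ is a nondegenerate homomorphism $C_0(G)\to\LL(X)$, the tensor homomorphism $\nu\otimes\id\colon C_0(G)\otimes C^*(G)\to\LL(X\otimes C^*(G))$ is nondegenerate, so it extends to the multiplier algebras and sends the unitary $w_G\in M(C_0(G)\otimes C^*(G))$ to a unitary $v$. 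With $v$ in hand, $\sigma(\xi)=v\cdot(\xi\otimes 1)$ visibly maps $X$ into $M(X\otimes C^*(G))$, and I would confirm it is a linear map compatible with the stated homomorphism $\id_A\otimes 1$ on the coefficient algebra.

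Next I would verify the defining axioms of a coaction on a correspondence, following the conventions of \cite{enchilada, KQRCorrespondenceCoaction}. The inner-product and module compatibilities amount to checking that $\sigma$ is $(\id_A\otimes 1)$-compatible, i.e. $\sigma(f\cdot\xi)=(\id_A\otimes 1)(f)\cdot\sigma(\xi)$, $\sigma(\xi\cdot f)=\sigma(\xi)\cdot(\id_A\otimes 1)(f)$, and $\<\sigma(\xi),\sigma(\y)\>=(\id_A\otimes 1)(\<\xi,\y\>)$; the last of these is where unitarity of $v$ is used, since $\<v(\xi\otimes 1),v(\y\otimes 1)\>=\<\xi\otimes 1,\y\otimes 1\>$. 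The coaction-identity (coassociativity) $(\sigma\otimes\id)\circ\sigma=(\id\otimes\delta_G)\circ\sigma$ reduces to the analogous identity for $v$, namely $(\bar{\id\otimes\delta_G})(v)=v_{12}v_{13}$, which in turn follows from the well-known identity $(\id\otimes\delta_G)(w_G)=(w_G)_{12}(w_G)_{13}$ for the canonical unitary $w_G$, transported through the homomorphism $\nu\otimes\id\otimes\id$. A coaction-nondegeneracy (or injectivity) condition in the relevant sense should likewise descend from that of $w_G$.

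Having established that $(\sigma,\id_A\otimes 1)$ is a coaction on $(X,A)$, I would obtain $\zeta$ by checking that the pair
\[
(\psi,\pi):=(\bar{k_X\otimes\id}\circ\sigma,\;(k_A\otimes 1))
\]
is a Cuntz-Pimsner covariant correspondence homomorphism from $(X,A)$ into $(M(C^*(E)\otimes C^*(G)), M(C^*(E)\otimes C^*(G)))$ in the sense of the four conditions recalled before the functoriality statement. The universal property then yields the homomorphism $\OO_{\psi,\pi}=\zeta\colon C^*(E)\to M(C^*(E)\otimes C^*(G))$ making the stated square commute, i.e. $\zeta\circ k_X=\bar{k_X\otimes\id}\circ\sigma$ and $\zeta\circ k_A=k_A\otimes 1$; its nondegeneracy and the coaction identity for $\zeta$ are inherited from those of $\sigma$ via the same diagram.

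The main obstacle I anticipate is the Cuntz-Pimsner covariance check, specifically condition (iii)--(iv) relating the ideal $J_X$ and the compact operators through $\psi^{(1)}$. One must show that $\psi^{(1)}\colon\KK(X)\to M(C^*(E)\otimes C^*(G))$ intertwines the left actions correctly on $J_X$, and here the interaction between the unitary $v$ and the left-module map $\varphi_A$ is delicate: since $v$ is built from $\nu=\pi_E\circ\kappa^*$ (a \emph{left} $A'$-module multiplier), one must check that conjugation by $v$ respects the covariance ideal and commutes appropriately with $\varphi_A\otimes 1$. Verifying that this compatibility holds — rather than merely formally manipulating symbols — is the technical heart of the argument, and it is presumably where the function-space multiplier tools developed in the appendix are brought to bear.
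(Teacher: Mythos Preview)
Your approach is correct in outline and identifies the right technical pivot --- the commutation of the unitary $v$ (equivalently, of $\nu$) with the left $A$-action $\varphi_A$ --- but it is far more laborious than the paper's proof, which is a one-line citation: the result follows from \cite[Corollaries~3.4--3.5]{KQRCorrespondenceCoaction} once one notes that $\nu\colon C_0(G)\to\LL(X)$ commutes with $\varphi_A$. In other words, the verification of the coaction axioms for $\sigma$ and the passage to $\zeta$ on $\OO_X$ that you sketch is exactly the content of those corollaries, so you are re-deriving black-boxed material rather than invoking it.

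One correction: you describe the commutation check as ``the technical heart of the argument'' and suggest the appendix is needed for it. In fact the commutation is immediate: both $\nu(f)=\pi_E(f\circ\kappa)$ and $\varphi_A(g)=\pi_E(g\circ r)$ act on $C_c(E^1)$ by pointwise multiplication, hence commute. The function-space multiplier tools in the appendix are not used here at all; they are brought in only later, in the proof of \propref{equivariant}, to compare $\zeta$ with the Landstad coaction $\epsilon$.
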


\begin{proof}
This follows from
\cite[Corollaries~3.4--3.5]{KQRCorrespondenceCoaction},
because
$\nu\colon C_0(G)\to \LL(X)$ commutes with $\varphi_A$.
\end{proof}

It will be convenient for us to find an equivalent expression for the coaction $\sigma$.
Note that we may regard $X$ as an $A'-A$ correspondence, and hence $X\otimes C^*(G)$ as an $(A'\otimes C^*(G))-(A\otimes C^*(G))$ correspondence.
Thus we can write
\[
\sigma(\xi)=\bar{\kappa^*\otimes\id}(w_G)\cdot (\xi\otimes 1).
\]
However, we can go further: by construction the unitary element $\bar{\kappa^*\otimes\id}(w_G)$ of $M(A'\otimes C^*(G))$ coincides with the function in $C_b(E^1,M^\beta(C^*(G)))$ whose value at an edge $e$ is
\[
\bar{\kappa^*\otimes\id}(w_G)(e)=w_G(\kappa(e))=\kappa(e);
\]
thus we can write
\[
\sigma(\xi)=\kappa\cdot(\xi\otimes 1).
\]

In \thmref{indirect} we used Landstad duality to show that $C^*(F)$ is isomorphic to the crossed product of $C^*(E)$ by a coaction $\epsilon$ of $G$; 
on the other hand, in \propref{sigma} we directly constructed a coaction~$\zeta$ of $G$ on $C^*(E)$.
To show that also $C^*(E)\times_\zeta G \cong C^*(F)$, we 
now show that in fact 
the coactions $\epsilon$ and $\zeta$ coincide.
Since the mechanism behind Landstand duality is that $\epsilon$ is pulled back along $\Pi\inv$ from the inner coaction $\delta^\mu$ on $C^*(F)$, this is accomplished by the following:

\begin{prop}
\label{equivariant}
Let $\zeta$ be the coaction on $C^*(E)$ from \propref{sigma},
and let $\Pi\colon C^*(E)\to M(C^*(F))$ 
and $\mu\colon C_0(G)\to M(C^*(F))$ be as in the proof of \thmref{indirect}.
Then $\Pi$
is $\zeta-\delta^\mu$ equivariant,
and hence $\zeta$ coincides with the coaction $\epsilon$ from \thmref{indirect}.
\end{prop}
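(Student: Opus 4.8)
The plan is to verify the equivariance of $\Pi$ directly on generators and then invoke the uniqueness built into Landstad duality. Recall that $\delta^\mu$ denotes the inner coaction on $C^*(F)$ implemented by the nondegenerate homomorphism $\mu\colon C_0(G)\to M(C^*(F))$; concretely, $\delta^\mu$ is the coaction obtained by conjugating by the unitary $(\mu\otimes\id)(w_G)\in M(C^*(F)\otimes C^*(G))$. The isomorphism $\Pi\colon C^*(E)\xrightarrow{\cong}C^*(F)^\alpha$ comes from the proof of \cite[Theorem~5.6]{dkq}, so it suffices to describe $\Pi$ on the generators $k_X(\xi)$ and $k_A(f)$ and then check that the square
\[
\xymatrix@C+30pt{
C^*(E) \ar[r]^-{\Pi} \ar[d]_{\zeta}
&M(C^*(F)) \ar[d]^{\delta^\mu}
\\
C^*(E)\otimes C^*(G) \ar[r]_-{\Pi\otimes\id}
&M(C^*(F)\otimes C^*(G))
}
\]
commutes on these generators. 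Because $\zeta$ and $\delta^\mu$ are both nondegenerate and $C^*(E)$ is generated by the images of $k_X$ and $k_A$, commutativity on generators forces $\Pi$ to be $\zeta$--$\delta^\mu$ equivariant.

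First I would handle the generators coming from $A$. On these the coaction $\zeta$ is trivial in the sense that $\zeta\circ k_A=k_A\otimes 1$, so the required identity reduces to showing $\delta^\mu(\Pi(k_A(f)))=(\Pi\otimes\id)(k_A(f)\otimes 1)=\Pi(k_A(f))\otimes 1$; that is, $\Pi(k_A(f))$ must be fixed by $\delta^\mu$. Since the generalized fixed-point algebra $C^*(F)^\alpha$ is precisely where the inner coaction $\delta^\mu$ acts as the identity tensor $1$ (this is the Landstad-duality characterization of the fixed-point algebra via the $\mu$-equivariant action), and since $\Pi$ lands in $C^*(F)^\alpha$ by construction, this case is essentially definitional. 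Here the key computational input is the explicit form $\Pi(k_A(f))=k_B(f\otimes 1_{C_0(G)})$ coming from the decomposition $B=A\otimes C_0(G)$.

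The substantive case is $k_X(\xi)$, where $\zeta\circ k_X=\bar{k_X\otimes\id}\circ\sigma$ and we computed in the paragraph following \propref{sigma} that $\sigma(\xi)=\kappa\cdot(\xi\otimes 1)$, with $\kappa$ regarded as a unitary in $M(A'\otimes C^*(G))=C_b(E^1,M^\beta(C^*(G)))$. The plan is to track how the generator $k_X(\xi)$ of $C^*(E)$ is mapped into $M(C^*(F))$ under $\Pi$ and to show that applying $\delta^\mu$ reproduces exactly the twist by $\kappa$. Concretely, $\Pi$ should send $k_X(\xi)$ to (a multiplier version of) $k_Y$ applied to the lift of $\xi$ to the skew-product edge space $E^1\times G$, and the inner coaction $\delta^\mu$, being conjugation by $(\mu\otimes\id)(w_G)$, acts on such elements by multiplication by the function whose value at $(e,t)$ records the group element $\kappa(e)$ built into the range map $r(e,t)=(r(e),\kappa(e)t)$ of $F$. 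Matching the $w_G(\kappa(e))=\kappa(e)$ appearing in $\sigma$ against the $w_G$ appearing in $\delta^\mu$ is the heart of the argument. I expect the main obstacle to be bookkeeping at the level of multipliers: one must move carefully between the identifications $M(A'\otimes C^*(G))=C_b(E^1,M^\beta(C^*(G)))$ and the corresponding realization of multipliers of $\KK(Y)$, using the extension $\bar{\psi^{(1)}}$ and the covariance diagram from \secref{prelim1} to justify that the strictly-continuous unitaries $\kappa$ and $(\mu\otimes\id)(w_G)$ really do implement the same twist after transport along $\Pi$. Once the generator identities are established, the final sentence of the proposition is immediate: $\Pi$ is the very isomorphism through which Landstad duality pulls back $\delta^\mu$ to define $\epsilon$, so $\zeta$--$\delta^\mu$ equivariance of $\Pi$ together with the uniqueness in Landstad duality yields $\zeta=\epsilon$.
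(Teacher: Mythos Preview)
Your overall strategy---checking the equivariance square on the generators $k_A(f)$ and $k_X(\xi)$---is viable and is essentially what the paper does, except that the paper uses \cite[Corollary~4.3]{KQRCorrespondenceCoaction} to pull the verification down to the correspondence level: rather than comparing $\delta^\mu\circ\Pi$ with $(\Pi\otimes\id)\circ\zeta$ on $k_X(\xi)$ directly, one checks the covariance condition
\[
\bar{\psi\otimes\id}(\sigma(\xi))\cdot \bar{\mu\otimes\id}(w_G)=\bar{\mu\otimes\id}(w_G)\cdot(\psi(\xi)\otimes 1)
\]
for the underlying correspondence map $(\psi,\pi)$, and then evaluates both sides pointwise at $(e,t)\in F^1$ using the function-space tools of the appendix. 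Your sketch of the $k_X(\xi)$ case has the right idea (match the $\kappa(e)$ coming from $\sigma$ against the $\kappa(e)t$ coming from the skew-product range map), but you never actually carry out this computation, and the paper's correspondence-level reduction is what makes the bookkeeping manageable.

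However, your treatment of the $k_A(f)$ case contains a genuine error. You assert that ``the generalized fixed-point algebra $C^*(F)^\alpha$ is precisely where the inner coaction $\delta^\mu$ acts as the identity tensor $1$,'' and deduce the required identity from $\Pi(k_A(f))\in C^*(F)^\alpha$. This is false: if $\delta^\mu$ were trivial on all of $C^*(F)^\alpha$, then the coaction $\epsilon$ pulled back along $\Pi$ would be trivial on \emph{all} of $C^*(E)$, contradicting the very proposition you are proving (since $\zeta$ is nontrivial on $k_X(X)$). The correct reason $\delta^\mu$ fixes $\Pi(k_A(f))=\bar{k_B}(f\otimes 1)$ is simply that this element commutes with the range of $\mu=\bar{k_B}(1\otimes\,\cdot\,)$; this is exactly condition~(i) in the paper's proof, and it is specific to the $A$-generators, not a general property of the fixed-point algebra.
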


\begin{proof}
It is equivalent to show that $(\Pi,\mu)\colon (C^*(E),C_0(G))\to M(C^*(F))$ is a covariant representation for the coaction $\zeta$, and for this we will apply \cite[Corollary~4.3]{KQRCorrespondenceCoaction}.

We will need to know how the homomorphism $\Pi$ from \cite{dkq} can be described using the techniques of \cite{KQRCorrespondenceFunctor}: \cite[Proof of Theorem~5.6]{dkq} constructs a correspondence homomorphism
\[
(\psi,\pi)\colon(X,A)\to (M_B(Y),M(B)),
\]
although the notation in \cite{dkq} is substantially different\footnote{The roles of $E,X,A$ and $F,Y,B$ are interchanged, and what we call $(\psi,\pi)$ here was written as $(\mu,\nu)$ in \cite{dkq}.}.
In the terminology of \cite[Definition~3.1]{KQRCorrespondenceFunctor}, \cite[Proof of Theorem~5.6]{dkq} shows that $(\psi,\pi)$ is Cuntz-Pimsner covariant, so that by \cite[Corollary~3.6]{KQRCorrespondenceFunctor}
there is a nondegenerate homomorphism $\OO_{\psi,\pi}$ making the diagram
\[
\xymatrix@C+30pt{
(X,A) \ar[r]^-{(\psi,\pi)} \ar[d]_{(k_X,k_A)}
&(M_B(Y),M(B)) \ar[d]^{(\bar{k_Y},\bar{k_B})}
\\
C^*(E) \ar[r]_-{\OO_{\psi,\pi}}
&M_B(C^*(F))
}
\]
commute; the homomorphism $\Pi$ from \cite{dkq} coincides with $\OO_{\psi,\pi}$.

Thus, by \cite[Corollary~4.3]{KQRCorrespondenceCoaction} it suffices to show that
\[
(\psi,\pi,\mu)\colon(X,A,C_0(G))\to (M_B(Y),M(B))
\]
is covariant for $(\sigma,\id_A\otimes 1)$, in the sense of 
\cite[Definition~2.9]{KQRCorrespondenceCoaction}.
Thus we must show that
\begin{enumerate}
\item $(\pi,\mu)$ is covariant for $(A,\id_A\otimes 1)$, and

\item \label{covariant}
$\bar{\psi\otimes\id}\circ\sigma(\xi)=\bar{\mu\otimes\id}(w_G)\cdot(\psi(\xi)\otimes 1)\cdot \bar{\mu\otimes\id}(w_G)^*$
for all $\xi\in X$.
\end{enumerate}

Condition~(i) is immediate because $\pi$ and $\mu$ commute.  
Next, we rewrite~(ii) in an equivalent form:
\begin{enumerate}
\item[(ii)$'$]
$\bar{\psi\otimes\id}(\sigma(\xi))\cdot \bar{\mu\otimes\id}(w_G)=\bar{\mu\otimes\id}(w_G)\cdot(\psi(\xi)\otimes 1)$
for all $\xi\in X$.
\end{enumerate}
To proceed further, notice that the maps $\psi$, $\pi$, and $\mu$ from \cite{dkq} take a particularly simple form in our present context:
\begin{itemize}
\item $\psi=\id_X\otimes 1_{M(C_0(G))}$;

\item $\pi=\id_A\otimes 1_{M(C_0(G))}$;

\item $\mu=1_{M(A)}\otimes \id_{C_0(G)}$.
\end{itemize}
(We should explain our notation in the above expression for $\psi$: it follows from the definitions that, as a Hilbert $(A\otimes C_0(G))$-module, $Y$ coincides with the external tensor product $X\otimes C_0(G)$ (where $C_0(G)$ is regarded as a Hilbert module over itself in the canonical way). One just has to keep in mind that $Y$ does \emph{not} coincide with $X\otimes C_0(G)$ as a $B$-correspondence --- the left $B$-module multiplication is twisted by the cocycle $\kappa$.)
Thus, for $\xi\in X$ we can write:
\begin{itemize}
\item $\psi(\xi)=\xi\otimes 1$;

\item $\bar{\psi\otimes\id}(\sigma(\xi))=\sigma(\xi)_{13}
=\kappa_{13}\cdot (\xi\otimes 1\otimes 1)$;

\item $\bar{\mu\otimes\id}(w_G)=1\otimes w_G$.
\end{itemize}

Since both sides of (ii)$'$ are adjointable Hilbert-module maps from $B\otimes C^*(G)$ to $Y\otimes C^*(G)$, and $A\odot C_c(G)$ is dense in $B$, it suffices to check that the two sides of (ii)$'$ take equal values on elementary tensors of the form $f\otimes g\otimes a$, with $f\in A$, $g\in C_c(G)$, and $a\in C^*(G)$. Evaluating the right-hand side of (ii)$'$ gives
\begin{equation}
\begin{split}
\label{LHS}
&\bigl((1\otimes w_G)\cdot (\xi\otimes 1\otimes 1)\bigr)\cdot(f\otimes g\otimes a)
\\&\quad=(1\otimes w_G)\cdot \bigl((\xi\otimes 1\otimes 1)\cdot(f\otimes g\otimes a)\bigr)
\\&\quad=(1\otimes w_G)\cdot(\xi\cdot f\otimes g\otimes a).
\end{split}
\end{equation}
Now we must use the function-space techniques from \appxref{appendix}.
We have $1\otimes w_G\in C_b(F^0,M^\beta(C^*(G)))$, with value $t$ at $(v,t)\in F^0$, and $\xi\cdot f\otimes g\otimes a\in C_c(F^1,C^*(G))$,
so by \corref{multiplier coefficient} we can evaluate the last quantity in \eqref{LHS} at $(e,t)\in F^1$, giving
\begin{equation*}
\begin{split}
&(1\otimes w_G)\bigl(r(e,t)\bigr)(\xi\cdot f\otimes g\otimes a)(e,t)
\\&\quad=(1\otimes w_G)(r(e),\kappa(e)t)(\xi\cdot f)(e)g(t)a
\\&\quad=\kappa(e)t\xi(e)f(s(e))g(t)a
\\&\quad=\xi(e)f(s(e))g(t)\kappa(e)ta.
\end{split}
\end{equation*}

We proceed similarly with the left-hand side of (ii)$'$:
\begin{equation}
\begin{split}
\label{RHS}
&\bigl(\kappa_{13}
\cdot (\xi\otimes 1\otimes 1)\cdot(1\otimes w_G)\bigr)
\cdot (f\otimes g\otimes a)
\\&\quad=\kappa_{13}\cdot\bigl(\xi\cdot f\otimes w_G(g\otimes a)\bigr)
\end{split}
\end{equation}
Now, $\kappa_{13}\in C_b(F^1,M^\beta(C^*(G)))$, 
with value $\kappa(e)$ at $(e,t)$, 
and $\xi\cdot f\otimes w_G(g\otimes a)\in C_c(F^1,C^*(G))$ because $\xi\cdot f\in C_c(E^1)$ and $w_G(g\otimes a)\in C_c(G,C^*(G))$,
so by \corref{multiplier coefficient}
we can evaluate the right-hand side of \eqref{RHS} at $(e,t)\in F^1$, giving
\begin{align*}
&\Bigl(\kappa_{13}\cdot\bigl(\xi\cdot f\otimes w_G(g\otimes a)\bigr)\Bigr)(e,t)
\\&\quad=\kappa_{13}(e,t)\bigl(\xi\cdot f\otimes w_G(g\otimes a)\bigr)(e,t)
\\&\quad=\kappa(e)(\xi\cdot f(e)\bigl(w_G(g\otimes a)\bigr)(t)
\\&\quad=\kappa(e)\xi(e)f(s(e))w_G(t)(g\otimes a)(t)
\\&\quad=\kappa(e)\xi(e)f(s(e))tg(t)a
\\&\quad=\xi(e)f(s(e))g(t)\kappa(e)ta.
\end{align*}
Therefore we have verified (ii)$'$,
and this finishes the proof.
\end{proof}

\begin{appendix}

\section{Functions and multipliers}
\label{appendix}

In \secref{coaction} we need to compute with bimodule multipliers in terms of functions. 
If $T$ is a locally compact Hausdorff space and $C$ is a $C^*$-algebra, we will use without comment the following identifications (see, e.g., \cite{tfb} or \cite[Appendix~C]{enchilada}):
\begin{itemize}
\item $C_0(T,C)=C_0(T)\otimes C$;

\item $M(C_0(T)\otimes C)=C_b(T,M^\beta(C))$,
\end{itemize}
where we write $M^\beta(C)$ to denote $M(C)$ with the strict topology.
Note that since the action of $C_b(T,M^\beta(C))$ by multipliers on $C_0(T,C)$ is via pointwise multiplication, it preserves $C_c(T,C)$.

We will need to use functions as multipliers on certain $C^*$-correspondences; since this theory is not easily available in the literature, we give details for all the results we need.
However, we make no attempt to construct a general theory --- rather, we do only enough to establish  \corref{multiplier coefficient}, which we need in \secref{coaction}.

For a topological graph $E$, we write (as in the rest of the paper) $A=C_0(E^0)$, $X=X(E)$, and $A'=C_0(E^1)$.
We will regard $X\otimes C$ both as an $(A'\otimes C)-(A\otimes C)$ correspondence and as an $(A\otimes C)$-correspondence.

The following lemma is routine:

\begin{lem}\label{first function}
$C_c(E^1,C)$ embeds densely in the $(A'\otimes C)-(A\otimes C)$ correspondence $X\otimes C$ in the following way: 
if $\xi,\eta\in C_c(E^1,C)\subset X\otimes C$,
$f\in C_c(E^1,C)\subset A'\otimes C$, 
and $g\in C_c(E^0,C)\subset A\otimes C$,
then $f\cdot \xi$ and $\xi\cdot g$ are the elements of $C_c(E^1,C)$ given by
\begin{align}
\label{one}
(f\cdot\xi)(e)&=f(e)\xi(e)
\\
\label{two}
(\xi\cdot g)(e)&=\xi(e)\cdot g(s(e)),
\end{align}
and $\<\xi,\eta\>$ is the element of $C_c(E^0,C)\subset A\otimes C$ given by
\begin{equation}
\label{three}
\<\xi,\eta\>(v)=\sum_{s(e)=v}\xi(e)^*\eta(e).
\end{equation}
Moreover, $g\cdot \xi$ is the element of $C_c(E^1,C)$ given by
\begin{equation}
\label{four}
(g\cdot\xi)(e)=g(r(e))\xi(e).
\end{equation}
\end{lem}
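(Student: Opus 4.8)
The plan is to realize $C_c(E^1,C)$ as a dense subspace of the external tensor product $X\otimes C$, using the identifications $C_0(E^1,C)=A'\otimes C$ and $C_0(E^0,C)=A\otimes C$ from the start of this appendix, and then to read off the four formulas by first checking them on elementary tensors and extending by continuity. First I would set up the external tensor product structure: since $X$ is an $A'$--$A$ correspondence and $C$ is the standard $C$--$C$ correspondence, $X\otimes C$ is an $(A'\otimes C)$--$(A\otimes C)$ correspondence whose operations are determined on elementary tensors by $(f\otimes c)\cdot(\xi\otimes d)=(f\cdot\xi)\otimes cd$, $(\xi\otimes c)\cdot(g\otimes d)=(\xi\cdot g)\otimes cd$, $\<\xi\otimes c,\eta\otimes d\>=\<\xi,\eta\>\otimes c^*d$, with the left $A\otimes C$ action coming from $\varphi_A\otimes\id_C$. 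The algebraic tensor product $C_c(E^1)\odot C$ maps into $C_c(E^1,C)$ by $\xi\otimes c\mapsto\xi(\cdot)c$; this map is injective, and since $C_c(E^1)$ is dense in $X$, $C_c(E^1)\odot C$ is dense in $X\otimes C$.

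Next I would verify the four identities on these elementary-tensor functions. Writing $(f\otimes c)\cdot(\xi\otimes d)$ as a $C$-valued function and using $(\pi_E(f)\xi)(e)=f(e)\xi(e)$ gives $e\mapsto f(e)\xi(e)\,cd$, which is exactly the pointwise product of $f(\cdot)c$ and $\xi(\cdot)d$; this is \eqref{one}. The identities \eqref{two}, \eqref{three}, and \eqref{four} follow the same way, each time pulling the scalar values $\xi(e)$, $g(s(e))$ past the elements of $C$ and matching against $(\xi\cdot g)(e)=\xi(e)g(s(e))$, $\<\xi,\eta\>(v)=\sum_{s(e)=v}\bar{\xi(e)}\eta(e)$, and $\varphi_A(f)=\pi_E(f\circ r)$. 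Because the operations and the right-hand sides are all (bi)linear, the formulas then hold on all of $C_c(E^1)\odot C$.

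The main work, and the one genuinely topological point, is the passage from $C_c(E^1)\odot C$ to all of $C_c(E^1,C)$. I would first note that $C_c(E^1)\odot C$ is dense in $C_c(E^1,C)$ in the inductive limit topology, by uniform approximation on a fixed compact support. The key estimate is that \eqref{three} defines a continuous map from $C_c(E^1,C)$, with the inductive limit topology, into $C_0(E^0,C)=A\otimes C$; this is where the hypothesis that $s$ is a \emph{local homeomorphism} enters, since it guarantees that for $\Xi\in C_c(E^1,C)$ the sum $v\mapsto\sum_{s(e)=v}\Xi(e)^*\Xi(e)$ is locally finite with continuous, compactly supported value, and that $\|\Xi\|_{X\otimes C}^2=\sup_v\bigl\|\sum_{s(e)=v}\Xi(e)^*\Xi(e)\bigr\|_C$ is dominated by a quantity depending only on the support and the sup-norm of $\Xi$. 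Granting this estimate, inductive limit convergence forces $X\otimes C$-norm convergence for nets with support in a fixed compact set, so $C_c(E^1)\odot C$ is $X\otimes C$-norm dense in $C_c(E^1,C)$, the latter embeds densely in $X\otimes C$, and all four formulas extend from elementary tensors to arbitrary elements of $C_c(E^1,C)$ by continuity.

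I expect this estimate---controlling the module norm by the inductive-limit data through the local-homeomorphism structure of $s$---to be the only step requiring real care; everything else is bookkeeping with elementary tensors, consistent with the lemma being stated as routine.
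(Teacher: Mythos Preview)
Your proposal is correct and follows essentially the same route as the paper: verify the formulas on elementary tensors in $C_c(E^1)\odot C$, identify the key technical point as the inner-product formula \eqref{three} landing in $C_c(E^0,C)$ together with the norm estimate coming from the local-homeomorphism property of $s$, and then extend by density and continuity. The only organizational difference is that the paper first equips $C_c(E^1,C)$ directly with the pre-Hilbert module structure via \eqref{two}--\eqref{three} and then identifies its completion with the external tensor product $X\otimes C$, whereas you start from $X\otimes C$ and embed $C_c(E^1,C)$ into it; the technical content is the same.
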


\begin{proof}
First of all, \eqref{two}--\eqref{three} make $C_c(E^1,C)$ into a pre-Hilbert $C_c(E^0,C)$-module (where the latter is regarded as a dense $*$-subalgebra of $C_0(E^0,C)=A\otimes C$). The only non-obvious property of pre-Hilbert modules is that \eqref{three} does give an element of $C_c(E^0,C)$, but this can be proved by an argument similar to those used in \cite[Lemma~1.5]{kat:class}.

Observe that the Hilbert-module norm on $C_c(E^1,C)$ is given by
\begin{equation}\label{norm}
\|\xi\|=\sup_{v\in E^0}\Bigl\|\sum_{s(e)=v}\xi(e)^*\xi(e)\Bigr\|^{1/2},
\end{equation}
which is larger than the uniform norm.
In particular, for $e\in E^1$ the
evaluation map
$\xi\mapsto \xi(e)$ from $C_c(E^1,C)$ to $C$
is bounded from the Hilbert-module norm to the norm of $C$.

Computing with elementary tensors of the form $\xi\otimes c$ for $\xi\in C_e(E^1)$ and $c\in C$, it is now routine to verify that
the completion of the pre-Hilbert module $C_c(E^1,C)$ is isomorphic to the external tensor product $X\otimes C$ of the Hilbert $A$-module $X$ and the Hilbert $C$-module $C$.

Now regarding $X\otimes C$ as an $(A'\otimes C)-(A\otimes C)$ correspondence,
\eqref{one} is obviously true on elementary tensors, hence for $f,\xi\in C_c(E^1)\odot C$, and therefore as stated by density of $C_c(E^1)\odot C$ in $C_c(E^1,C)$ and by continuity of evaluation.
Finally, \eqref{four} follows from \eqref{one}.
\end{proof}

\begin{lem}\label{closed}
Let $K\subset E^1$ be compact. On the subspace
\[
C_K(E^1,C):=\{\xi\in C_c(E^1,C):\supp\xi\subset K\}
\]
of $X\otimes C$, the Hilbert-module norm and the uniform norm are equivalent.
Consequently, $C_K(E^1,C)$ is norm-closed in $X\otimes C$.
\end{lem}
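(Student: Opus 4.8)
The plan is to prove equivalence of the two norms by establishing each inequality separately, and then to read off closedness from completeness. One inequality requires no hypothesis at all: as already observed in the proof of \lemref{first function}, for any edge $e$ with $s(e)=v$ the positive element $\xi(e)^*\xi(e)$ is dominated in $C$ by the finite sum $\sum_{s(e')=v}\xi(e')^*\xi(e')$, so that $\|\xi(e)\|^2=\|\xi(e)^*\xi(e)\|\le\|\xi\|^2$ by the order properties of a $C^*$-algebra, and hence $\sup_{e\in E^1}\|\xi(e)\|\le\|\xi\|$. Thus the uniform norm is always bounded by the Hilbert-module norm.

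For the reverse inequality I would bound the Hilbert-module norm from above by the triangle inequality: for each $v\in E^0$, only edges lying in $s^{-1}(v)\cap K$ contribute to the sum in \eqref{norm}, since $\supp\xi\subset K$, so
\[
\Bigl\|\sum_{s(e)=v}\xi(e)^*\xi(e)\Bigr\|\le\sum_{s(e)=v}\|\xi(e)\|^2\le\#\bigl(s^{-1}(v)\cap K\bigr)\,\|\xi\|_\infty^2,
\]
where $\|\xi\|_\infty=\sup_{e}\|\xi(e)\|$ denotes the uniform norm. The crux is to show that $N:=\sup_{v\in E^0}\#\bigl(s^{-1}(v)\cap K\bigr)$ is finite; granting this, $\|\xi\|\le\sqrt N\,\|\xi\|_\infty$, and together with the first inequality the two norms are equivalent on $C_K(E^1,C)$.

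The finiteness of $N$ is the main point, and it is precisely where the hypotheses that $s$ is a local homeomorphism and that $K$ is compact enter. Since $s$ is a local homeomorphism, each edge has an open neighborhood on which $s$ is injective; these neighborhoods cover $K$, so by compactness finitely many of them, say $U_1,\dots,U_n$, suffice. Each $U_i$ contains at most one edge over a given $v\in E^0$, whence $\#\bigl(s^{-1}(v)\cap K\bigr)\le n$ for every $v$, giving $N\le n<\infty$. I expect this uniform bound on fiber cardinalities to be the only genuinely nontrivial step; everything else is routine estimation.

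Finally, for the closedness assertion I would argue via completeness. Equipped with the uniform norm, $C_K(E^1,C)$ is a Banach space: a uniform limit of continuous $C$-valued functions supported in the closed set $K$ is again continuous and supported in $K$. By the equivalence just established, $C_K(E^1,C)$ is therefore also complete in the Hilbert-module norm it inherits as a subspace of $X\otimes C$. A complete subspace of a normed space is closed, so $C_K(E^1,C)$ is norm-closed in $X\otimes C$, as claimed.
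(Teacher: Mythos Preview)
Your proof is correct and follows essentially the same approach as the paper: both establish that the uniform norm is dominated by the Hilbert-module norm (which was already noted in \lemref{first function}), then bound the Hilbert-module norm by $\sqrt{N}\,\|\xi\|_\infty$ where $N=\sup_v\#(s^{-1}(v)\cap K)$, and deduce closedness from the norm equivalence. You supply more detail than the paper does---in particular, you spell out the finite-subcover argument for the bound on $N$ (which the paper leaves as ``easy to verify'') and make the completeness argument for closedness explicit---but the strategy is identical.
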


\begin{proof}
By \eqref{norm}, the uniform norm on $C_K(E^1,C)$ is smaller than the Hilbert-module norm from $X\otimes C$. Thus it suffices to show that the Hilbert-module norm is bounded above by a multiple of the uniform norm.
Let $\xi\in C_K(E^1,C)$.
Using compactness of $K$ and local homeomorphicity of $s$, it is easy to verify that the cardinalities of the intersections $K\cap s\inv(v)$ for $v\in E^0$ are bounded above by some nonnegative integer $d$.
Then for any $v\in E^0$ we have
\begin{align*}
\Bigl\|\sum_{s(e)=v}\xi(e)^*\xi(e)\Bigr\|
&\le \sum_{s(e)=v}\|\xi(e)\|^2
\le d\|\xi\|_u^2,
\end{align*}
where $\|\x\|_u$ denotes the uniform norm of $\xi$,
and the result follows.
\end{proof}

Since $X\otimes C$ is a nondegenerate $(A'\otimes C)-(A\otimes C)$ correspondence, the left module action of $A'\otimes C$ extends canonically to the multiplier algebra $M(A'\otimes C)=C_b(E^1,M^\beta(C))$ (and similarly for the left module action of $A\otimes C$). The following corollary allows us to compute this extended left module action on
generators:

\begin{cor}
\label{multiplier coefficient}
If $m\in C_b(E^1,M^\beta(C))$ and $\xi\in C_c(E^1,C)\subset X\otimes C$, then the element $m\cdot\xi$ of $X\otimes C$ lies in $C_c(E^1,C)$, and
\begin{equation}\label{left}
(m\cdot\xi)(e)=m(e)\xi(e)\for e\in E^1.
\end{equation}
If $n\in C_b(E^0,M^\beta(C))$ then both $n\cdot \xi$ and $\xi\cdot n$ lie in $C_c(E^1,C)$, and
\begin{align*}
(n\cdot\xi)(e)&=n(r(e))\xi(e);
\\
(\xi\cdot n)(e)&=\xi(e)n(s(e)).
\end{align*}
\end{cor}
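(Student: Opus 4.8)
The plan is to reduce the multiplier actions to the pointwise formulas of \lemref{first function} by a density argument, using \lemref{closed} to convert between the uniform norm and the Hilbert-module norm on functions supported in a single fixed compact set. Since $X\otimes C$ is a nondegenerate left $(A'\otimes C)$-module, the action $\varphi\colon A'\otimes C\to\LL(X\otimes C)$ extends uniquely to $\bar\varphi\colon M(A'\otimes C)\to\LL(X\otimes C)$, and this extension is characterized by $\bar\varphi(m)\varphi(a)=\varphi(ma)$ for $a\in A'\otimes C$; moreover the product $ma\in A'\otimes C$ is computed pointwise, as noted at the start of this appendix. The element $m\cdot\xi$ in the statement is by definition $\bar\varphi(m)\xi$, so it suffices to produce a net in $A'\otimes C$ that reproduces $\xi$ in the limit and against which $\bar\varphi(m)$ can be computed concretely.

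First I would fix $\xi\in C_c(E^1,C)$ with $\supp\xi\subset K$ compact, choose $h\in C_c(E^1)$ with $h\equiv 1$ on $K$, and take an approximate identity $(c_i)$ for $C$. Viewing each $h\otimes c_i$ as an element of $C_c(E^1,C)\subset A'\otimes C$, formula \eqref{one} gives $((h\otimes c_i)\cdot\xi)(e)=h(e)c_i\xi(e)=c_i\xi(e)$, which converges to $\xi(e)$ uniformly in $e$ with all terms supported in $K$; by \lemref{closed} this forces $(h\otimes c_i)\cdot\xi\to\xi$ in $X\otimes C$. Applying the bounded operator $\bar\varphi(m)$ together with the identity $\bar\varphi(m)\varphi(h\otimes c_i)=\varphi(m(h\otimes c_i))$, I get $m\cdot\xi=\lim_i (m(h\otimes c_i))\cdot\xi$. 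Since $m(h\otimes c_i)\in A'\otimes C$ has pointwise value $m(e)h(e)c_i$, formula \eqref{one} shows that $(m(h\otimes c_i))\cdot\xi$ is supported in $K$ with value $m(e)h(e)c_i\xi(e)$, and this converges uniformly to $e\mapsto m(e)\xi(e)$ because $m$ is bounded and $c_i\xi(e)\to\xi(e)$ uniformly. A final appeal to \lemref{closed} identifies the limit in $X\otimes C$ with the element $e\mapsto m(e)\xi(e)$ of $C_c(E^1,C)$, which is \eqref{left}.

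The two formulas for $n\in C_b(E^0,M^\beta(C))=M(A\otimes C)$ follow from exactly the same scheme, now using the nondegeneracy of the left and right $(A\otimes C)$-actions on $X\otimes C$. For $n\cdot\xi$ I would run the argument with $h\in C_c(E^0)$ chosen to equal $1$ on the compact set $r(K)$ and replace \eqref{one} by \eqref{four}, obtaining $(n\cdot\xi)(e)=n(r(e))\xi(e)$; for $\xi\cdot n$ I would use the right-module associativity $(\xi\cdot g)\cdot n=\xi\cdot(gn)$, choose $h\equiv 1$ on the compact set $s(K)$ (compact since $s$ is a local homeomorphism), and apply \eqref{two} to get $(\xi\cdot n)(e)=\xi(e)n(s(e))$. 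The only real obstacle is bookkeeping the supports: every approximant in sight must stay inside one fixed compact set so that \lemref{closed} legitimizes passing from uniform to Hilbert-module convergence, and this is precisely what the cutoff functions $h$ guarantee.
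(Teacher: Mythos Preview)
Your argument is correct and complete, but it reaches the formula by a route different from the paper's. The paper approximates the multiplier: it chooses a net $m_i\in C_c(E^1,C)$ converging strictly to $m$, observes that each $m_i\cdot\xi$ lies in $C_K(E^1,C)$ with the expected pointwise values by \lemref{first function}, and then uses \lemref{closed} plus continuity of evaluation to pass to the limit; along the way it must check separately that evaluation at a point $e$ is strictly continuous on $C_b(E^1,M^\beta(C))$. You instead approximate the module element, factoring $\xi=\lim_i(h\otimes c_i)\cdot\xi$ through the dense subalgebra and exploiting the algebraic identity $\bar\varphi(m)\varphi(a)=\varphi(ma)$, so that the only multiplier computation needed is the pointwise product $m(h\otimes c_i)$ inside $C_0(E^1,C)$, which is already covered by the identification $M(C_0(E^1,C))=C_b(E^1,M^\beta(C))$. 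Your version is slightly more elementary in that it never needs strict continuity of point evaluation on multipliers, trading that for the (easy) fact that $c_i\xi(e)\to\xi(e)$ uniformly because $\xi(E^1)$ is compact in $C$; the paper's version is a bit more conceptual in that it works directly with an arbitrary strict approximation of $m$. Both rely on \lemref{closed} in exactly the same way to convert uniform convergence into Hilbert-module convergence on $C_K(E^1,C)$.
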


\begin{proof}
Choose a net $\{m_i\}$ in $C_c(E^1,C)$ converging strictly to $m$ in $M(C_0(E^1,C))$.
The conclusion holds for each $m_i\cdot\xi$, by \lemref{first function}.
Let $K=\supp\xi$, a compact subset of $E^1$.
Then $m_i\cdot\xi\in C_K(E^1,C)$ for all $i$.
Since $m_i\cdot\xi\to m\cdot\xi$ in the norm of $X\otimes C$ ,
we have $m\cdot\xi\in C_K(E^1,C)$, by \lemref{closed}.

For each $e\in E^1$,
by norm-continuity of evaluation on $C_c(E^1,C)$
we have
\[
(m\cdot\xi)(e)
=\lim_i (m_i\cdot\xi)(e)
=\lim_i m_i(e) \xi(e).
\]
Moreover, for any $a\in C$, 
we can choose $f\in C_0(E^1,C)$ such that $f(e)=a$ and compute:
\begin{align*}
m_i(e)a
&=m_i(e)f(e)
=(m_if)(e)
\to (mf)(e)
=m(e)a.
\end{align*}
Thus evaluation is strictly continuous on $C_b(E^1,M^\beta(C))$;
in particular,
\begin{align*}
\lim_i m_i(e)\xi(e)
&=m(e)\xi(e),
\end{align*}
which establishes~\eqref{left}.

The statement for $n\cdot \xi$ follows by composing with the range map $r\colon E^1\to E^0$, and the statement for $\xi\cdot n$ is proved similarly to the above argument for $m\cdot \xi$.
\end{proof}

\end{appendix}

%---- 

%\bibliographystyle{amsplain}
%\bibliography{cstar}

\providecommand{\bysame}{\leavevmode\hbox to3em{\hrulefill}\thinspace}
\providecommand{\MR}{\relax\ifhmode\unskip\space\fi MR }
% \MRhref is called by the amsart/book/proc definition of \MR.
\providecommand{\MRhref}[2]{%
  \href{http://www.ams.org/mathscinet-getitem?mr=#1}{#2}
}
\providecommand{\href}[2]{#2}

\end{document}